\def\@rmrk#1#2{\refstepcounter
    {#1}\@ifnextchar[{\@yrmrk{#1}{#2}}{\@xrmrk{#1}{#2}}}
\makeatletter\@addtoreset{equation}{section}\makeatother
 \newfont{\bfit}{cmbxti10 scaled 2000}
 \newfont{\biggi}{cmr12 scaled 2000}
 \newcommand{\N}{\mathbb{N}}
 \newcommand{\prob}{\mathbb{P}}
 \newcommand{\me}{\mathbb{E}}
 \renewcommand{\P}{\mathbb{P}}
 \newcommand{\skrim}{{\mathcal M}}
 \newcommand{\skrin}{{\mathcal N}}
 \newcommand{\heap}[2]{\genfrac{}{}{0pt}{}{#1}{#2}}
\def\1{{\mathchoice {1\mskip-4mu\mathrm l}      
{1\mskip-4mu\mathrm l}
{1\mskip-4.5mu\mathrm l} {1\mskip-5mu\mathrm l}}}
\newcommand{\eq}{\begin{equation}}
\newcommand{\en}{\end{equation}}
\newenvironment{Proof}
{\vskip0.1cm\noindent{\bf Proof. }{\hspace*{0.3cm}}}%
{\nopagebreak {\hspace*{\fill}\rule{2mm}{2mm}}\\ }
\renewcommand{\subsection}{\secdef \subsct\sbsect}
\newcommand{\subsct}[2][default]{\refstepcounter{subsection}
\vspace{0.15cm}
{\flushleft\bf \arabic{section}.\arabic{subsection}~\bf #1  }
\nopagebreak\nopagebreak}
\newcommand{\sbsect}[1]{\vspace{0.1cm}\noindent
{\bf #1}\vspace{0.1cm}}
\newtheorem{theorem}{Theorem}[section]
\newtheorem{lemma}[theorem]{Lemma}
\newtheorem{cor}[theorem]{Corollary}
\newtheoremstyle{thm}{1.5ex}{1.5ex}{\itshape\rmfamily}{}
{\bfseries\rmfamily}{}{2ex}{}
\newtheoremstyle{rem}{1.3ex}{1.3ex}{\rmfamily}{}
{\itshape\rmfamily}{}{1.5ex}{}
\theoremstyle{rem}
\def\thebibliography#1{\section*{References}
  \list%
  {\arabic{enumi}.}
    {\settowidth\labelwidth{[#1]}\leftmargin\labelwidth
    \advance\leftmargin\labelsep
    \parsep0pt\itemsep0pt
    \usecounter{enumi}}
    \def\newblock{\hskip .11em plus .33em minus .07em}
    \sloppy                   
    \sfcode`\.=1000\relax}
\begin{document}
\title[LDP  for  Sample  Pooling ]
{\Large  Large  deviations, asymptotic bounds on  the  number  of  positive  individuals in  a  Bernoulli sample via  the  number of  positive  pool  samples drawn  on  the  bernoulli  sample }

\vspace{0.5cm}

\renewcommand{\thefootnote}{}

\renewcommand{\thefootnote}{1}

\maketitle
\thispagestyle{empty}
\vspace{-0.5cm}

\centerline{\sc By   Kwabena  Doku-Amponsah}
\centerline{University  of Ghana}

\renewcommand{\thefootnote}{}

\footnote{\textit {Affiliation:} University  of  Ghana, Department of  Statistics \& Actuarial Science}
\footnote{ \textit {Email:}kdoku-amponsah@ug.edu.gh}
\renewcommand{\thefootnote}{1}

\vspace{0.5cm}

\begin{quote}{\small }{\bf Abstract.}
In  this  paper   we  define  for a  Bernoulli  samples the \emph{ empirical infection  measure}, which counts  the  number of  positives (infections)  in  the Bernoulli sample  and  for  the \emph{ pool samples}  we  define  the  empirical pool  infection measure,  which  counts  the  number  of  positive  (infected) pool  samples.  For  this  empirical  measures  we  prove a  joint  large  deviation  principle  for Bernoulli  samples.  We  also found  an  asymptotic relationship  between  the \emph{ proportion  of  infected  individuals } with  respect  to the  samples  size, $n$  and  the \emph{ proportion of infected pool  samples}  with  respect  to  the  number  of  pool  samples, $k(n).$  All  rate  functions  are  expressed  in terms  of  relative  entropies.
\end{quote}\vspace{0.3cm}

{\textit{AMS Subject Classification:} 60F10, 11B68, 11P84.}\\

{\textit{Keywords: } Empirical infection measure, empirical pool infection  measure,infected pool sample, large deviation principle,relative  entropy, entropy, integer  partition.}

\vspace{0.3cm}
\vspace{0.3cm}

\section{Introduction}

Sample pooling is a testing procedure mainly used in medical research to test several individuals at a time. In pool testing, samples from individuals are pooled together and tested for the presence of infectious diseases (specificity).For  instance, Noguchi Memorial Institute for  Medical Research (NMIMR) at  the  early  stages  of  the  COVID-19  pool up  samples at a time and test. This testing procedure allows the detection of positive samples with sufficient Positive Predictive Value (PPV) and detects negative samples with Sufficient Negative Predictive Value (NPV). If NMIMR pools individuals and test at once and the test results is negative, all the individuals who comprise of the pool sample is declared negative of COVID-19 resulting in huge cost saving because of the  inadequacy  of enough  testing instrument  or  equipment in Ghana.\\

 In  particular, fifteen samples may be tested together, using only the resources required for a single test. If a pool sample is negative, it can be inferred  that all individuals  were negative. If a pool sample comes back positive, then each sample needs to be tested individually to find out which individuals were positive. As samples are pooled together, ultimately fewer tests are run overall, meaning fewer testing supplies are needed, and results can be given to patients more quickly in most cases.   NMIMR, in the wake of COVID-19 in Ghana,  has  adopted  this  asymptomatic dignostic  testing  approach  in  order  to  come  with  large  number  of  testing  requirement  needs  of  the Ghana  as  we  have  crossed  the  35,000  confirm  cases  required  since  the  detection  of  the  first  positive  case  in  March  12,  2020. \\

Indeed, sampling pooling or  the asymptomatic  Dignostic  Testing of taking  up to fifteen (15) samples per pool  by the  NMIMR, has significantly  increased the   testing  capacity of  Ghana  given  the limited  resources such  as   equipment and test kits availability  in  the  country.\\

This method works well when there is a low prevalence of cases, meaning more negative results are expected.However, a  major  problem  will  arise  when  the  specificity  is  very  high  in  which  case  many  or  all  the  pool  samples  will  test  positive.  In  this  case  knowing  how  many  individuals  in  the  sample  are  asymptotically  infected  via  the  infected  pool  samples may  be key to the  management  of  the pandemic  situation.\\   

In  this  paper  we  find  an asymptotic function relationship  between   the  number  of positive individual cases  and  positive  pooled  samples as  the  number individual to  be tested  become  very  large.  To  be  specific,  we  define  the  empirical  proportion  measure  which  counts  the  number   of  infested  individual  in  the  sample  with  respect  to  the  sample  size  and  the  empirical pooled  proportion  measure  which  counts   the  number  of  infected  pooled  samples  with  respect  to  the  number  of  pool samples,   $k(n).$\\

 In  this sequel  we  define  the  two  main  objects  for   the  study: the  empirical  infection  measure  and  the  empirical pool infection  measure.  And  for  these  empirical  measures  we  prove  a  joint  large  deviation  principle with  speed  $n.$  From  this  large  deviation  principle  we  an  asymptotic  functional  relation  between  the  number  of  infected  individual ls  in  the  sample   and  the  number  infected  pools  samples  as   the  number  of  individuals increase. Further,  we  contract from our  main  large  deviation  principle  an  LDP  for  the  number  of  positive  cases  in  the  sample as  the  sample  size $n$  becomes  very  large.\\
 
 The  main  techniques  deplore  to  prove  our  LDPs  are  simlarly  to  the  ones  used  in  the  paper \cite{DM2010}  or  the  Ph.D Thesis  \cite{DA2006}: These  Gartner-Ellis  Theorem,see \cite{DZ1998},  Conbinatorics arguments  via  the  Method  of  types, and  method  of  Mixtures,  see \cite{Bi2004}.\\
 
 Note,  the  main different  between  our  main  LDP  for  the  empirical  measure  we  studied  in  this  paper   and  the  LDP  for  empirical measures  of   samples,  see  \cite{DZ1998}, is  that  while  the  earlier  result  is pooled  from  random  sample  and the  later  result   come  from  a  deterministic  space.  Also  large  deviation principle  for  sequences  of maxima and  minima    has  been  found  in  \cite{GM2014}.\\
 
 The  remaining  part of  the  article  is  organized  in  the  following  manner:  Section~\ref{Sec2}  contains the  main  background  of  the  article  and  the  statement  of  main  result. In Section~\ref{Sec3}  we  give  the  proofs  of  the  main  result.  The   derivation  of  Corollary~\ref{main0a} and  Conclustion  are   given in  Section~\ref{Sec4}.
 \pagebreak

\section{Statement  of Main Results }\label{Sinr}\label{Sec2}
\subsection{Empirical  Measures of  the Bernoulli Samples.}
Let  $X=(X_1, X_2, ...,X_{n})^{T}$  be  a  random  vector  of  independent  and  identically  distributed  Bernoulli  random  variables  each  with  success  probability  $\mu_n$.
  Let   $(N_1,N_2,N_3,...,N_{k(n)})$ be  a  random  sample  drawn uniformly  from  the set  of  integer  partitions  of  $n$ of  length $k(n)$.  Suppose the  components   of  $X$ are  grouped  into $Y_1=(X_1^{(1)},...,X_{N_{1}}^{(1)}),...,Y_{k(n)}(X_1^{(k)},...,X_{N_{k}}^{(k)})$   in  such  away  that  $X_i^{(r)}$  and $X_j^{(m)}$,  are  independent  for all  $r\not=m,$ and $i\not=j$,  where  $i,j=1,2,3,...$.   We shall  call  $(X_1^{(1)},...,X_{N_{1}}^{(1)}),...,(X_1^{(k)},...,X_{N_{k}}^{(k)})$  pool  samples taken  from  the  component.  of  $X.$ 
 We  shall  study  the  pool  samples  under  the    the  law  of  the  pool  samples:
 
  $$ \P_n\Big\{Y_1,...,Y_{k(n)}\Big\}:=\P\Big\{Y_1,...,Y_k(n)\,  |N_{1}\not =0,..., N_{k(n)}\not =0, N=n\Big\},$$ 
  
  where  
  $$N_1+N_2+N_3+...+N_{k(n)}=N.$$

For any  sample  $X$   we  define two measures, the
\emph{empirical infection  measure}, ~ $P_1^{X}\in\skrim(\{0,1\})$,~by
$$P_1^{X}\big(a\big ):=\frac{1}{n}\sum_{i\in [n]}\delta_{X_i}\big(a\big)$$
and the  empirical  pooled  infection  measure
$P_2^{X}\in\skrim(\N\times \skrin(\{0,1\})),$ by
$$P_2^{X,N}\big(m,\ell\big):=\frac{1}{n\beta_n}\sum_{j\in[k(n)]}\delta_{(N_j,L_j )}\big(m,\ell\big),$$

where  $L_j=\Big(\frac{1}{N_j}\sum_{i=1}^{N_j}\delta_{X_i^{(j)}}(x)\,,x\in\{0,1\}\Big) $    and  $\beta_n=k(n)/n$. Note  that the  total mass of  the  empirical infection  measure  is  $\1$   and  the  empirical  pool  infection  measure  is  also $\1$. Observe  also  that  $$\,P_1^{X}(x)=\beta_n
 \sum_{(m,l)\in\N\times\skrin(\{0,1\})} m\ell(x)\,P_2^{X}(m,\ell).  $$

 By  $\skrim(\{0,1\})$  we  denote  the  space  of  probability  vectors  on  the  polish  space   $\{0,1\}$  equipped  with  the weak  topology.By  $\skrin(\{0,1\})$   we  denote  space  probability  measures  on  $\{0,1\}$ and  we  assume by  convention for  any pair $(m,\ell)$ we  have  $\ell=(0,0)$   iff   $m=0.$  The  first theorem  in  this  Section, Theorem~\ref{main1a}, is  the  joint  LDP  for  the  empirical infection  measure  and  the  empirical  pool infection  measure.\\

 \subsection{Main Results.}
     We  assume  henceforth,  $k(n)\to\infty,$ $\beta_n:=k(n)/n\to \beta\in(0,1]$  as  $n\to\infty$  and  $n\mu_n/k(n)\to q.$
 Observe,by   the  fore-mentioned  assumptions,we  have $$\beta\Big(q(0)+q(1)\Big)=1.$$ 
 
 We  define  the  rate  function  $J_{\beta}:\skrim(\{0,1\})\times\skrim(\N\times\skrin(\{0,1\}))\to(0,\infty]$
  by
 \begin{equation}
 \begin{aligned}
 J_{\beta}(\omega,\,\pi)= \left\{\begin{array}{ll}  H\Big(\omega/\beta\Big\| \,q\,\Big)+ H\Big(\pi\|\Phi_\beta^{\omega}\Big),  & \mbox{if\qquad  $\langle \pi\rangle = \omega/\beta.$  }\\
 \infty & \mbox{otherwise,}		
 \end{array}\right.
 \end{aligned}
 \end{equation} 
 
 where  $$\langle \pi\rangle(x)=\sum_{(m,\ell)\in \N\times \skrin(\{0,1\})}m\ell(x)\pi(m,\ell)$$
 
 and  the  trivariate probability  distribution

 \begin{equation}
 \Phi_{\beta}^{\omega}(m,\ell)=\frac{1}{(1-e^{1/\beta})}\Big[\frac{[\omega(0)/\beta]^{[m\ell(0)]}e^{-[\omega(0)/\beta]}}{[m\ell(0)]!}\Big]\Big[\frac{[\omega(1)/\beta]^{[m\ell(1)]}e^{-[\omega(1)/\beta]}}{[m\ell(1)]!}\Big],
 \end{equation} 
 where   $m\in\N$  and  $\ell\in\skrin(\{0,1\}).$
 The  first  theorem, Theorem~\ref{main1a}  is our main  results  the  joint  LDP  for  the  empirical  infection  measure  and  the  empirical  poll infection  measure.

\begin{theorem}\label{main1a}
	Suppose  $X=(X_1, X_2, ...,X_{n})^{T}$  is a  random  vector  of  independent  and  identically  distributed  Bernoulli  random  variables  each  with  success  probability  $\mu_n$ satisfying  $n\mu_n(x)/k(n)\to q(x),$ for $x\in\{0,1\}.$
	 Let  $(X_1^{(1)},...,X_{N_{1}}^{(1)}),...,(X_1^{(k)},...,X_{N_{k}}^{(k)})$   be  pool samples  drawn  from components   of  $X$.
		Then, as $n\rightarrow\infty,$  the  pair
$(P_1^{X},P_2^{X,N})$  satisfies a  large  deviation  principle  in   the  space 
$\skrim(\{0,1\})\times\skrim(\N\times\skrin(\{0,1\}))$
with good rate function  $\beta J_{\beta}(\omega,\,\pi)$.

\end{theorem}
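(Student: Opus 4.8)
The plan is to deduce the large deviation principle from the G\"artner--Ellis theorem, after computing the limiting joint cumulant generating functional of the pair by the method of mixtures and the method of types. The starting observation is the exact identity $P_1^{X}=\beta_n\langle P_2^{X,N}\rangle$ recorded above: for bounded $g\colon\{0,1\}\to\R$ and bounded $h\colon\N\times\skrin(\{0,1\})\to\R$ the relevant exponent collapses to a sum of pool-local contributions,
\begin{equation}
n\langle g,P_1^{X}\rangle+n\langle h,P_2^{X,N}\rangle=\sum_{j=1}^{k(n)}\Big(N_j\langle g,L_j\rangle+\tfrac1{\beta_n}h(N_j,L_j)\Big),
\end{equation}
each summand depending only on the composition $(a_j,b_j)$ of the $j$-th pool (its numbers of healthy and infected members). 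Since $\pi\mapsto\langle\pi\rangle$ carries the unbounded weight $m$ and is merely lower semicontinuous for the weak topology, I would not pass through the contraction principle; instead the linear constraint $\langle\pi\rangle=\omega/\beta$ should fall out by itself as the effective domain of the resulting Legendre transform.

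The heart of the argument is the cumulant computation. Using the method of mixtures I would represent the conditioned law $\P_n$ as a family of independent pools, each carrying independent Poisson numbers of healthy and infected individuals and conditioned to be non-empty, the whole ensemble being further conditioned on the single global constraint $\sum_j N_j=n$ inherited from the partition. De-conditioning turns the interacting ensemble into a genuinely independent one, so that the conditioned expectation becomes a ratio of two unconditioned Poissonised expectations; a local limit theorem for the lattice sum $\sum_j N_j$ shows that the denominator contributes only sub-exponentially at the relevant tilt. Factorisation over the $k(n)$ independent pools together with the relation $\tfrac1n=\beta_n\,\tfrac1{k(n)}$ then give
\begin{equation}
\Lambda(g,h):=\lim_{n\to\infty}\tfrac1n\log\E_n\Big[\exp\!\big(n\langle g,P_1^{X}\rangle+n\langle h,P_2^{X,N}\rangle\big)\Big]=\beta\log\sum_{(m,\ell)}e^{\,m\langle g,\ell\rangle+\frac1\beta h(m,\ell)}\,\Phi_\beta^{\omega^\ast}(m,\ell),
\end{equation}
where the saddle parameter $\omega^\ast=\omega^\ast(g)$ is fixed by the total-count constraint and, in the scaling regime $n\mu_n/k(n)\to q$, the tilted single-pool law is exactly $\Phi_\beta^{\omega^\ast}$; this is where the Poisson form of $\Phi_\beta^{\omega}$ and its normalising prefactor $1/(1-e^{-1/\beta})$, which removes the empty pool, enter.

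It then remains to identify the Fenchel--Legendre transform of $\Lambda$ with $\beta J_\beta$ and to invoke G\"artner--Ellis. The dual splits into two relative-entropy pieces: the cost of the overall infection level deviating from its typical value, which equals $H(\omega\,\|\,\beta q)=\beta H(\omega/\beta\,\|\,q)$ and comes from the total-count tilt, and the cost of the empirical pool law deviating from the tilted reference, which equals $\beta H(\pi\,\|\,\Phi_\beta^{\omega})$ and comes from Sanov's theorem for the $k(n)\sim\beta n$ independent pools; the admissibility condition $\langle\pi\rangle=\omega/\beta$ is exactly the compatibility of the two levels. Verifying that $\Lambda$ is finite and G\^ateaux differentiable (hence essentially smooth) in a neighbourhood of the origin, using the Poisson tails to secure finiteness for $g$ near $0$, yields the lower bound and the upper bound over compact sets.

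The main obstacle, and the step I would spend most effort on, is twofold, and both parts stem from the unboundedness of the pool size $m$. First, because the state space $\N\times\skrin(\{0,1\})$ is non-compact one must establish exponential tightness of $(P_1^{X},P_2^{X,N})$ so as to promote the compact-set upper bound to all closed sets and to guarantee that $\beta J_\beta$ is a good rate function; I would obtain this from the Poisson (hence super-exponential) control of the total mass $\sum_j N_j=n$ and of the large pools, exactly the mechanism that keeps $\langle\pi\rangle$ controlled on level sets. Second, the local-limit and saddle-point justification that conditioning on the exact value $\sum_j N_j=n$ perturbs the logarithmic asymptotics only through the Lagrange term $H(\omega\,\|\,\beta q)$---and not by an extra exponential factor---must be carried out carefully, since it is precisely this estimate that couples the two empirical measures and pins the constraint $\langle\pi\rangle=\omega/\beta$.
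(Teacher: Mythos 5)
Your architecture is genuinely different from the paper's, so it is worth saying what the paper actually does before pointing at the problems: the paper never computes a joint cumulant generating functional. It proves a \emph{conditional} LDP for $P_2^{X,N}$ given $\{P_1^{X}=\omega_n\}$ by exact counting (multinomial coefficients plus a refined Stirling bound, Lemma~\ref{Types}, together with the continuity Lemma~\ref{Com7} along $\omega_n\to\omega$), a \emph{marginal} LDP for $P_1^{X}$ by G\"artner--Ellis on the two-point space (Lemma~\ref{Lemma2}, an elementary product computation), and then glues the two by Biggins' mixture theorem \cite[Theorem~5(b)]{Bi2004} together with exponential tightness and lower semicontinuity of the rate. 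That two-stage route is precisely engineered to avoid the step your proposal leans on hardest, namely a joint G\"artner--Ellis argument.

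Your single-shot route has two concrete gaps. First, the displayed formula for $\Lambda(g,h)$ is wrong in detail: the tilt that pins $\sum_j N_j=n$ must depend on $h$ as well as $g$ (tilting by $e^{h(N_j,L_j)/\beta}$ changes the pool-size law), and the de-conditioning ratio contributes Lagrangian terms that do not cancel against the tilted single-pool mass. Already in the scalar toy model of i.i.d.\ Poisson variables $Z_j$ conditioned on their sum, the correct limit is $\Lambda(f)=\inf_\theta\big\{-\theta+\beta\log\E\big[e^{f(Z)+\theta Z}\big]\big\}-\inf_\theta\big\{-\theta+\beta\log\E\big[e^{\theta Z}\big]\big\}$, which is not of the form $\beta\log\E_{\lambda^\ast}\big[e^{f}\big]$ for a tilted parameter $\lambda^\ast$; your envelope-style shortcut silently drops the $-\theta^\ast$ and denominator terms. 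Second, and more seriously, even with the corrected $\Lambda$, the G\"artner--Ellis \emph{lower} bound is not available off the shelf: the exact identity $P_1^{X}=\beta_n\langle P_2^{X,N}\rangle$ makes $\Lambda$ degenerate along the directions in which $g$ is traded against the (inadmissible, linearly growing in $m$) tilt $m\langle g,\ell\rangle$ absorbed into $h$, so the candidate rate is $+\infty$ off the affine set $\{\omega=\beta\langle\pi\rangle\}$ and no point of that set is an exposed point of the Legendre transform in the product space with exposing hyperplane in the class of bounded test functions; essential smoothness of $\Lambda$, which you invoke, does not repair this. One would need an approximation or tilting argument intrinsic to the constraint set --- or the paper's conditional-plus-mixture decomposition, which sidesteps exposed points entirely. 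Relatedly, your claim that the constraint ``falls out by itself as the effective domain'' requires the super-exponential control of the $m$-tails to be actually proved, since $\pi\mapsto\langle\pi\rangle$ is not continuous for the weak topology; you correctly flag this, but it is exactly where the work lies, and in the paper it is handled implicitly by the conditioning structure rather than by a moment estimate.
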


  From Theorem~\ref{main1a}   we  obtain  the  Corollary~\ref{main0a}  which  gives   us  the  asymptotic  relationship  between  the  number  of  infected  individual  in  the  sample  and  the  number  of  infected pool  samples.\\
  
\begin{cor}\label{main0a}
	Suppose  $X=(X_1, X_2, ...,X_{n})^{T}$  is a  random  vector  of  independent  and  identically  distributed  Bernoulli  random  variables  each  with  success  probability  $\mu_n$ satisfying  $n\mu_n(x)/k(n)\to q(x),$ for $x\in\{0,1\}.$	Then,   the  proportion  of  positive individuals, $I$,  satisfies

\begin{equation}\label{cor1}\begin{aligned}
\lim_{n\to\infty}\frac{1}{n}\log\P_n\Big\{I=t\Big|S=\sigma\Big\}	=-\Big[(1-t)\log\Big(\frac{(1-t)}{(1-\beta q(1))}\Big)+  t\log\frac{t}{\beta q(1)}\Big]
	\end{aligned}
	\end{equation}
	where $$t=-\beta\log\Big[1-(1-e^{-1/\beta})\sigma\Big].$$

	\end{cor}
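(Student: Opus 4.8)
The plan is to read the statement off the joint large deviation principle of Theorem~\ref{main1a} by applying the contraction principle to two scalar observables. Write $I=P_1^{X}(1)$ for the proportion of positive individuals and let $S$ be the proportion of infected pools, that is the mass that $P_2^{X,N}$ puts on pools carrying at least one positive,
$$ S\;=\;P_2^{X,N}\big(\{(m,\ell)\in\N\times\skrin(\{0,1\}):m\ell(1)\ge 1\}\big). $$
Both are functionals of the pair $(P_1^{X},P_2^{X,N})$, so by Theorem~\ref{main1a} and the contraction principle the pair $(I,S)$ obeys an LDP with speed $n$ and good rate function
$$ \widehat J_\beta(t,\sigma)=\inf\Big\{\beta J_\beta(\omega,\pi):\ \omega(1)=t,\ \pi\big(\{m\ell(1)\ge1\}\big)=\sigma\Big\}. $$
First I would record that conditioning splits the rate as
$$ \lim_{n\to\infty}\tfrac1n\log\P_n\{I=t\mid S=\sigma\}=-\widehat J_\beta(t,\sigma)+\inf_{t'}\widehat J_\beta(t',\sigma), $$
so the whole problem reduces to evaluating the two constrained infima.

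Next I would carry out the minimisation over the pool measure $\pi$ for fixed $\omega$. Since $J_\beta(\omega,\pi)=H(\omega/\beta\|q)+H(\pi\|\Phi_\beta^{\omega})$ on the admissible set $\langle\pi\rangle=\omega/\beta$, for fixed $\omega$ the cheapest $\pi$ is the reference law $\Phi_\beta^{\omega}$, which annihilates $H(\pi\|\Phi_\beta^{\omega})$. It then remains to check that $\Phi_\beta^{\omega}$ realises the prescribed value of $S$. Because $\Phi_\beta^{\omega}$ is a product of two Poisson laws, with means $\omega(0)/\beta$ and $\omega(1)/\beta$ in the coordinates $m\ell(0)$ and $m\ell(1)$, conditioned to exclude the empty pool (whence the factor $1-e^{-1/\beta}$, using $\omega(0)+\omega(1)=1$), the chance that a pool carries at least one positive is
$$ \Phi_\beta^{\omega}\big(\{m\ell(1)\ge1\}\big)=\frac{1-e^{-\omega(1)/\beta}}{1-e^{-1/\beta}}. $$
Setting this equal to $\sigma$ and writing $t=\omega(1)$ gives exactly $t=-\beta\log\big[1-(1-e^{-1/\beta})\sigma\big]$, the asserted link between $t$ and $\sigma$; for this matched pair the pool entropy vanishes and $\widehat J_\beta(t,\sigma)=\beta H(\omega/\beta\|q)$ with $\omega=(1-t,t)$.

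Finally I would evaluate $\beta H(\omega/\beta\|q)$. Using the hypotheses $\beta q(1)=\lim_n\mu_n$ and $\beta q(0)=1-\lim_n\mu_n$, so that $\omega/\beta$ and $q$ carry the common mass $1/\beta$ and the entropy has no correction terms, a direct computation gives
$$ \beta H(\omega/\beta\|q)\big|_{\omega=(1-t,t)}=(1-t)\log\frac{1-t}{1-\beta q(1)}+t\log\frac{t}{\beta q(1)}, $$
which is precisely the Bernoulli relative entropy on the right-hand side of \eqref{cor1}. To close the argument I would check that the normalising infimum $\inf_{t'}\widehat J_\beta(t',\sigma)$ is attained at the same matched configuration, so that the difference of infima collapses to the single term $-\beta H(\omega/\beta\|q)$, rather than leaving a residual pool-entropy contribution.

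I expect the main obstacle to lie in the contraction step rather than in the algebra. The functional $S$ is built from the indicator $\1\{m\ell(1)\ge1\}$, which is not weakly continuous on the non-compact space $\N\times\skrin(\{0,1\})$, so the contraction principle cannot be quoted verbatim; I would control this by an exponential-tightness/truncation argument, bounding uniformly the contribution of pools of large size $m$ and passing to the limit along the truncation, and by verifying that $\Phi_\beta^{\omega}$ is genuinely admissible for the constraint $\langle\pi\rangle=\omega/\beta$ so that the vanishing of $H(\pi\|\Phi_\beta^{\omega})$ is legitimate. A secondary point is that $\{I=t\}$ and $\{S=\sigma\}$ are boundary events, so the equalities in \eqref{cor1} must be read as the usual large-deviation limits over shrinking neighbourhoods, the matching of the open and closed bounds being supplied by lower semicontinuity and convexity of $J_\beta$.
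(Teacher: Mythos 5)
Your computational core coincides with the paper's own proof: the paper likewise contracts the joint LDP of Theorem~\ref{main1a}, takes the minimising pool measure to be $\Phi_\beta^{\omega}$ (killing the term $H(\pi\|\Phi_\beta^{\omega})$), links $\sigma$ and $t$ through the no-positive-pool probability --- the paper computes $1-\pi_2((0,1))$, which simplifies to $\frac{1-e^{-\omega(1)/\beta}}{1-e^{-1/\beta}}$, numerically identical to your $\Phi_\beta^{\omega}\big(\{m\ell(1)\ge 1\}\big)$ --- and then evaluates $\beta H(\omega/\beta\,\|\,q)$ at $\omega=(1-t,t)$ to obtain the Bernoulli relative entropy. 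Your two technical caveats are legitimate points the paper silently ignores: the indicator $\1\{m\ell(1)\ge 1\}$ is indeed not weakly continuous, and the admissibility check $\langle\Phi_\beta^{\omega}\rangle=\omega/\beta$ in fact \emph{fails} as literally stated, since under $\Phi_\beta^{\omega}$ the conditioned-Poisson counts have mean $(\omega(x)/\beta)/(1-e^{-1/\beta})$, not $\omega(x)/\beta$; this is a defect your argument inherits from the paper rather than introduces.

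The genuine gap is in your final normalisation step, and it is self-contradictory as written. You correctly decompose
\begin{equation*}
\lim_{n\to\infty}\tfrac1n\log\P_n\{I=t\mid S=\sigma\}=-\widehat J_\beta(t,\sigma)+\inf_{t'}\widehat J_\beta(t',\sigma),
\end{equation*}
but then assert that because the normalising infimum is attained at the same matched configuration, the difference ``collapses to the single term $-\beta H(\omega/\beta\|q)$.'' If the infimum over $t'$ is attained at the very $t$ appearing in the numerator, the two terms cancel and the conditional rate is $0$, not $-\beta H(\omega/\beta\|q)$. To recover the stated right-hand side you would need $\inf_{t'}\widehat J_\beta(t',\sigma)=0$, i.e.\ the conditioning event $\{S=\sigma\}$ to have subexponentially decaying probability; that requires some $t'$ with $\omega'/\beta=q$ \emph{and} pool fraction $\sigma$ realised at zero pool entropy, which pins $\sigma$ to the single typical value $\frac{1-e^{-q(1)}}{1-e^{-1/\beta}}$ and is false for general $\sigma$. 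The paper avoids (rather than resolves) this issue: in its proof the limit is identified directly with the constrained infimum $\phi_\sigma(t)$ of \eqref{cor2}, i.e.\ the conditional probability is treated as the joint event $\{I=t,\,S=\sigma\}$ with no subtraction performed. So you must either adopt the paper's reading explicitly (stating that the claimed limit is the joint rate), or retain your conditional decomposition and accept that it yields $0$ at the matched $t$; as it stands, your two assertions cannot simultaneously hold.
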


The  next  LDP,  Theorem~\ref{main1b} will  play  a  vital  role  in  the  proof  of  Theorem~\ref{main1a}.
	
\begin{theorem}\label{main1b}
	Suppose  $X=(X_1, X_2, ...,X_{n})^{T}$  be  a  random  vector  of  independent  and  identically  distributed  Bernoulli  random  variables  each  with  success  probability  $n\mu_n/k(n)\to q(x)$.
	Let  $(X_1^{(1)},...,X_{N_{1}}^{(1)}),...,(X_1^{(k)},...,X_{N_{k}}^{(k)})$   be  pool samples  drawn  from components   of  $X$ conditional  on  the  event  $ \Big\{ P_1^{X}=\omega \Big\}$. 
	Then, as $n\rightarrow\infty,$   
	$	P_2^{X,N}$  satisfies a  large  deviation  principle  in   the  space 
	$\skrim(\N\times\skrin(\{0,1\}))$
	with good rate function  $\beta J_{\omega}^{\beta}(\pi),$ where
	
	\begin{equation}
	\begin{aligned}
J_{\omega}^{\beta}(\pi)=\left\{\begin{array}{ll}  H\Big(	\pi\,\|\Phi_{\beta}^\omega\Big),  & \mbox{if\qquad  $\langle \pi\rangle = \omega/\beta.$  }\\
	\infty & \mbox{otherwise.}		
	\end{array}\right.
	\end{aligned}
	\end{equation} 
	
	\end{theorem}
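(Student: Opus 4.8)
The plan is to realise $P_2^{X,N}=\frac{1}{k(n)}\sum_{j=1}^{k(n)}\delta_{(N_j,L_j)}$ as the empirical measure of the pool profiles $(N_j,L_j)$ and to derive its conditional LDP from a Sanov-type theorem for an associated \emph{free} (Poissonised) model, via the method of mixtures. The first observation is that a pool $(N_j,L_j)$ is completely encoded by the integer pair $(a_j,b_j):=(N_jL_j(0),N_jL_j(1))\in\Z_{\ge0}^2\setminus\{(0,0)\}$ counting its negative and positive members, so that $P_2^{X,N}$ is a measure in $\skrim(\N\times\skrin(\{0,1\}))$ supported on this countable set. Crucially, the identity $P_1^{X}(x)=\beta_n\langle P_2^{X,N}\rangle(x)$ recorded in Section~\ref{Sec2} shows that conditioning on $\{P_1^{X}=\omega\}$ is exactly the \emph{linear constraint} $\langle P_2^{X,N}\rangle=\omega/\beta_n$, equivalently the fixed totals $\sum_j a_j=n\omega(0)$, $\sum_j b_j=n\omega(1)$ and $\sum_j(a_j+b_j)=n$.

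First I would set up the free model in which the profiles are i.i.d.\ with the Poisson-product marginal $\Phi_\beta^{\omega}$. Since $\Phi_\beta^{\omega}$ has all exponential moments (Poisson tails), Sanov's theorem yields an LDP for the empirical measure at speed $k(n)$ with good rate function $H(\,\cdot\,\|\Phi_\beta^{\omega})$; because $k(n)/n\to\beta$ this is an LDP at speed $n$ with good rate $\beta H(\,\cdot\,\|\Phi_\beta^{\omega})$. The combinatorial heart of the argument (the method of types) is then to show that, to exponential order in $n$, a type $\pi$ carries the same weight under $\P_n\{\,\cdot\mid P_1^{X}=\omega\}$ as under this product law conditioned on the prescribed totals: the factorial weights $1/(a_j!\,b_j!)$ generated by counting the ways to split the labelled Bernoulli variables into pools of a given composition are precisely the Poisson weights appearing in $\Phi_\beta^{\omega}$, and the constraint on totals absorbs the Bernoulli parameter $\mu_n$ through $n\mu_n/k(n)\to q$.

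With these two ingredients the conditional LDP follows from the Gibbs conditioning (mixture) principle: conditioning the free empirical-measure LDP on the closed linear set $\{\langle\pi\rangle=\omega/\beta\}$ produces the rate $\beta H(\pi\,\|\Phi_\beta^{\omega})$ on that set and $+\infty$ off it, which is the asserted $\beta J_{\omega}^{\beta}(\pi)$ after the additive normalisation subtracting the exponential cost of the constraint event. Equivalently, one may test $P_2^{X,N}$ against finitely many bounded functions $g$ on $\N\times\skrin(\{0,1\})$, evaluate the conditioned cumulant limit $\lim_n\frac1n\log\E_n[\exp(n\langle g,P_2^{X,N}\rangle)\mid P_1^{X}=\omega]$ by the same Stirling/type-counting asymptotics, apply the Gärtner-Ellis theorem in $\R^{d}$, and pass to the projective limit (Dawson-Gärtner) to identify $\beta J_{\omega}^{\beta}$ as the Legendre transform. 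Goodness of $\beta J_{\omega}^{\beta}$ then follows from the Poisson exponential moments, which furnish the superexponential tightness needed to control the unbounded functional $\langle\,\cdot\,\rangle$, together with closedness of the constraint set.

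The hard part will be the exact matching of subexponential normalisations in the type-counting: one must verify, via Stirling's formula and a local limit theorem for the joint totals $(\sum_j a_j,\sum_j b_j)$, that the number of profile configurations of a given type $\pi$ meeting the totals, weighted by $\prod_j\Phi_\beta^{\omega}(a_j,b_j)$, is $\exp(-k(n)H(\pi\,\|\Phi_\beta^{\omega})+o(n))$ uniformly enough to close the matching upper and lower bounds, and that dividing by the normalising conditional probability contributes only the constant $\inf_{\langle\pi\rangle=\omega/\beta}H(\,\cdot\,\|\Phi_\beta^{\omega})$ to the speed-$n$ rate. In particular the nonempty-pool normalisation in $\Phi_\beta^{\omega}$ and the merely polynomial number of types must be shown not to perturb the rate, which is precisely where the combinatorial estimates of the method of types, rather than the soft LDP machinery, do the real work.
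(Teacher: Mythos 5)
Your route is genuinely different from the paper's. The paper proves Theorem~\ref{main1b} by a bare--hands method of types: it writes the conditional law $\P_n\{P_2^{X,N}=\pi_n \mid P_1^{X}=\omega_n\}$ down exactly, as a product of multinomial coefficients times a pool-size correction factor, sandwiches this probability between $e^{-n\beta_n H(\pi_n\|\Phi_{\beta_n}^{\omega_n})+n\eta_i(n)}$ with $\eta_i(n)\to 0$ via the refined Stirling formula (Lemma~\ref{Types}), proves the continuity statement $H(\pi_n\|\Phi_{\beta_n}^{\omega_n})\to H(\pi\|\Phi_\beta^{\omega})$ along convergent sequences (Lemma~\ref{Com7}), and then invokes the standard covering argument of the method of types together with integer-partition counting to upgrade these pointwise estimates to an LDP. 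There is no Poissonised free model, no Sanov theorem, and no Gibbs conditioning anywhere in the paper's argument; the Poisson structure of $\Phi_\beta^{\omega}$ emerges directly from Stirling applied to the multinomial splitting weights. Your observation that the factorial weights $1/(a_j!\,b_j!)$ match the Poisson weights is precisely the content of the paper's Lemma~\ref{Types}, so the combinatorial heart is shared; what differs is the soft machinery you wrap around it.

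That wrapper, however, has a concrete gap which the paper's direct route avoids. Gibbs conditioning of the free Sanov LDP on the closed set $\{\langle\pi\rangle=\omega/\beta\}$ yields the \emph{recentered} rate $\beta\bigl[H(\pi\|\Phi_\beta^{\omega})-\inf\{H(\pi'\|\Phi_\beta^{\omega}):\langle\pi'\rangle=\omega/\beta\}\bigr]$, and here the subtracted constant is strictly positive: since $\Phi_\beta^{\omega}$ is a Poisson product conditioned to be nonzero, its mean is $\langle\Phi_\beta^{\omega}\rangle(x)=\frac{\omega(x)/\beta}{1-e^{-1/\beta}}\neq\omega(x)/\beta$, so $\Phi_\beta^{\omega}$ itself violates the constraint and the infimum over the constraint set cannot vanish. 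Consequently your final identification --- that after the ``additive normalisation'' one obtains the asserted $\beta J_{\omega}^{\beta}(\pi)$ --- does not go through: the conditioned rate you produce differs from $\beta H(\pi\|\Phi_\beta^{\omega})$ by that nonzero constant, and the same issue infects your Gärtner--Ellis/Dawson--Gärtner variant, whose conditioned cumulant limit would again carry the normalising cost. To land on the theorem's stated rate one must estimate the conditional type probabilities directly, as the paper's Lemma~\ref{Types} does, rather than condition an unconditioned LDP; alternatively one would have to replace $\Phi_\beta^{\omega}$ by its exponential tilt with mean $\omega/\beta$, which kills the constant but changes the reference measure. (To be fair, this tension is latent in the paper itself --- a rate function with strictly positive infimum cannot govern a full LDP --- but the paper's proof never passes through a free model and so never has to confront the normalisation, whereas your argument makes the discrepancy explicit and then leaves it unreconciled.)
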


The  next  LDP,  Theorem~\ref{main1c} will   also  play  a  vital  role  in  the  proof  of  Theorem~\ref{main1a}.

\begin{theorem}\label{main1c}

Suppose  $X=(X_1, X_2, ...,X_{n})^{T}$  be  a  random  vector  of  independent  and  identically  distributed  Bernoulli  random  variables  each  with  success  probability  $n\mu_n(x)/k(n)\to q(x)$.
Let  $(X_1^{(1)},...,X_{N_{1}}^{(1)}),...,(X_1^{(k)},...,X_{N_{k}}^{(k)})$   be  pool samples  drawn  from components   of  $X$. 
	Then, as $n\rightarrow\infty,$  
	$	P_1^{X}$  satisfies a  large  deviation  principle  in   the  space 
	$\skrim(\{0,1\}\})$
	with good rate function $\beta I^{\beta}(\omega),$  while
	
	\begin{equation}
	I^{\beta}(\pi)=  H\Big(	\omega/\beta\,\|\,q\Big).
	\end{equation} 
	
\end{theorem}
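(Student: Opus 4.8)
The plan is to reduce the statement to the classical large deviation principle for the empirical measure of independent finite-alphabet variables, and then to read off the rate function by a direct Legendre computation. The first observation is that $P_1^{X}$ depends on the sample only through the values $X_1,\dots,X_n$ and not at all on the partition $(N_1,\dots,N_{k(n)})$ defining the pooling. Since $X$ is independent of the partition, conditioning on the event $\{N_1\neq 0,\dots,N_{k(n)}\neq 0,\ N=n\}$ that defines $\P_n$ does not alter the joint law of $(X_1,\dots,X_n)$: under $\P_n$ they remain i.i.d.\ Bernoulli with parameter $\mu_n$. Because $\omega\in\skrim(\{0,1\})$ is determined by the single coordinate $t:=\omega(1)\in[0,1]$, the space $\skrim(\{0,1\})$ is homeomorphic to $[0,1]$ and $P_1^{X}$ corresponds under this homeomorphism to the sample average $\bar{S}_n:=\frac1n\sum_{i=1}^n X_i$. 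Thus it suffices to prove an LDP at speed $n$ for $\bar{S}_n$ and to transport it back.

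First I would apply the G\"artner--Ellis theorem to $\bar{S}_n$. The relevant scaled cumulant generating function is
\begin{equation*}
\Lambda_n(\lambda)=\frac1n\log\E_n\!\Big[e^{\lambda\sum_{i=1}^n X_i}\Big]=\log\big(\mu_n(0)+\mu_n(1)e^{\lambda}\big),
\end{equation*}
and the hypothesis $n\mu_n(x)/k(n)\to q(x)$ together with $\beta_n\to\beta$ gives $\mu_n(x)\to\beta q(x)$, whence $\Lambda_n(\lambda)\to\Lambda(\lambda):=\log(\beta q(0)+\beta q(1)e^{\lambda})$ for every $\lambda\in\R$. The limit $\Lambda$ is finite and differentiable on all of $\R$, hence essentially smooth and lower semicontinuous; the G\"artner--Ellis theorem therefore yields an LDP for $\bar{S}_n$ at speed $n$ with good rate function $\Lambda^{*}(t)=\sup_{\lambda\in\R}\big(\lambda t-\Lambda(\lambda)\big)$.

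It then remains to identify $\Lambda^{*}$ with the asserted rate. Solving $\Lambda'(\lambda)=t$ for the maximiser and substituting back gives the explicit form
\begin{equation*}
\Lambda^{*}(t)=t\log\frac{t}{\beta q(1)}+(1-t)\log\frac{1-t}{\beta q(0)}.
\end{equation*}
Writing $\omega=(1-t,\,t)$ and using the normalisation $\beta(q(0)+q(1))=1$, one checks that $\Lambda^{*}(t)=\sum_{x}\omega(x)\log\frac{\omega(x)/\beta}{q(x)}=\beta\,H(\omega/\beta\,\|\,q)=\beta I^{\beta}(\omega)$, which is exactly the claimed rate function. Transporting the LDP through the homeomorphism $t\mapsto\omega$, and noting that $\skrim(\{0,1\})$ is compact so that lower semicontinuity of the rate already forces compact level sets and hence goodness, completes the argument.

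The only real subtlety---and the point I would treat most carefully---is the $n$-dependence of the underlying parameter $\mu_n$. Because the $X_i$ form a triangular array rather than a single i.i.d.\ sequence, one must confirm that the mere pointwise convergence $\Lambda_n\to\Lambda$ suffices; it does, since only the limiting cumulant generating function enters the G\"artner--Ellis hypotheses, and convergence over the two-point alphabet is automatically uniform. As a cross-check, the same rate follows from the method of types: $\P_n\{\bar{S}_n=t\}=\binom{n}{nt}\mu_n(1)^{nt}\mu_n(0)^{n(1-t)}$, and Stirling's formula gives $\frac1n\log\P_n\{\bar{S}_n=t\}\to-\Lambda^{*}(t)$, recovering the finite-alphabet Sanov bound and thereby both the upper and lower bounds of the principle.
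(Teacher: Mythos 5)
Your proposal is correct, and at the top level it takes the same route as the paper: the paper also proves Theorem~\ref{main1c} via the G\"artner--Ellis theorem (its Lemma~\ref{Lemma2}) followed by solving the variational problem $I(\omega)=\sup_g\{\langle g,\omega\rangle+\langle 1-e^{g},\beta q\rangle\}$. The substantive difference is in the cumulant generating function, and your computation is the more careful one. The paper passes from $\prod_{x\in\{0,1\}}\big(1-\mu_n(x)+e^{g(x)}\mu_n(x)\big)^n$ to $\prod_{x}e^{-(1-e^{g(x)})q(x)k(n)+o(n)}$, i.e.\ it uses the Poisson-type approximation $\log\big(1+\mu_n(e^{g}-1)\big)\approx\mu_n(e^{g}-1)$, which is justified only when $\mu_n(x)\to 0$; under the standing assumption $\beta_n\to\beta\in(0,1]$ one has $\mu_n(x)\to\beta q(x)$, which need not be small, so the paper's limit $\Lambda(g)=\langle e^{g}-1,\beta q\rangle$ is the Poissonian CGF rather than the exact Bernoulli one. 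Your exact limit $\Lambda(\lambda)=\log\big(\beta q(0)+\beta q(1)e^{\lambda}\big)$ avoids that step entirely. Remarkably, the two Legendre transforms agree on the probability simplex: the paper's transform evaluates to $\sum_{x}\omega(x)\log\frac{\omega(x)}{\beta q(x)}-\sum_{x}\big(\omega(x)-\beta q(x)\big)$, and the correction term vanishes by the normalisation $\beta\big(q(0)+q(1)\big)=1$, leaving exactly the expression your $\Lambda^{*}(t)$ reduces to; so both arguments land on $\beta I^{\beta}(\omega)$, provided $\beta H(\omega/\beta\,\|\,q)$ is read as the unnormalised relative entropy $\sum_{x}\omega(x)\log\frac{\omega(x)}{\beta q(x)}$. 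Beyond this, your write-up supplies several points the paper leaves implicit: that conditioning under $\P_n$ on the partition having nonzero parts does not alter the law of $(X_1,\dots,X_n)$, that for the triangular array only pointwise convergence $\Lambda_n\to\Lambda$ is needed in the G\"artner--Ellis hypotheses, that goodness follows from compactness of $\skrim(\{0,1\})$, and an independent method-of-types cross-check of the rate. In short: same skeleton, but your exact CGF computation repairs the one step of the paper's proof that is not valid as stated in the regime $\beta>0$.
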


\section{ Proof  of  Theorem~\ref{main1b}  and  Theorem~\ref{main1c}}\label{proofmain1bc}\label{Sec3}

\subsection{Proof  of   Theorem~\ref{main1b}  by Method of  Types }\label{proofmain}

We  begin  the  proof  of   Theorem~\ref{main1b}  by  first  defining two  main  sets: 
	 $$\skrim_n(\{0,1\}):=\Big\{\omega_n\in\skrim(\{0,1\}):n\omega_n(0),n\omega_n(1)\in\N\Big\}$$
	 and  
	 
	 $$\skrim_n(\N\times \skrin(\{0,1\})):=\Big\{\pi_n\in\skrim(\N\times \skrin(\{0,1\})):n\pi_n(m,\ell)\in\N ,\mbox{  for  all $(m,\ell)\in\N\times\skrin(\{0,1\})$}\Big\}.$$
	 
	 The  nest  Lemma~\ref{Types}  is  a  major  step  in  the  proof  of  Theorem~\ref{main1b}.
	
\begin{lemma}\label{Types}
Suppose  $X=(X_1, X_2, ...,X_{n})^{T}$  be  a  random  vector  of  independent  and  identically  distributed  Bernoulli  random  variables  each  with  success  probability  $\mu_n$.
Let  $(X_1^{(1)},...,X_{N_{1}}^{(1)}),...,(X_1^{(k)},...,X_{N_{k}}^{(k)})$   be  pool samples  drawn  from components   of  $X$.  Suppose  $(\omega_n,\,\pi_n)\to(\omega,\pi)\in\skrim(\{0,1\})\times\skrim(\N\times\skrin(\{0,1\})).$ Then,  we  have 	
$$e^{-n\beta_nH\Big(\pi_N\,\Big|\,\Phi_{\beta_n}^{\omega_n}\Big)+n\eta_1(n)}\le\P_n\Big\{P_2^{X,N}=\pi_n  \Big | P_1^{X}=\omega_n\Big\}\le e^{-n\beta_nH\Big(\pi_n\,\Big|\,\Phi_{\beta_n}^{\omega_n}\Big)+n\eta_2(n)},$$
$$\lim_{n\to\infty}\eta_1(n)=\lim_{n\to\infty}\eta_2(n)=0.$$
\end{lemma}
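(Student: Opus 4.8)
The plan is to prove the two-sided estimate by the method of types, i.e.\ by first rewriting the conditional probability as an exact combinatorial ratio and then inserting Stirling's formula. Throughout I would write $k=k(n)$ and $n_x=n\omega_n(x)$ for the number of sample individuals of type $x\in\{0,1\}$, which is frozen once we condition on $\{P_1^X=\omega_n\}$; and for a pool type $(m,\ell)$ I would set $a=m\ell(1)$ and $b=m\ell(0)$, so that $m=a+b$ and a pool of that type carries $a$ positives and $b$ negatives. The mean number of type-$x$ individuals per pool is then $n_x/k=\omega_n(x)/\beta_n$, which will be the Poisson parameter appearing in $\Phi_{\beta_n}^{\omega_n}$.

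First I would reduce the event to a counting problem. By exchangeability of the Bernoulli variables, conditioning on $\{P_1^X=\omega_n\}$ leaves the $n_1$ positives and $n_0$ negatives distributed among the $k$ pools in an exchangeable manner, while the event $\{P_2^{X,N}=\pi_n\}$ prescribes that exactly $k\pi_n(m,\ell)$ of the $k$ pools carry content type $(m,\ell)$; note that $\sum_{(m,\ell)}m\ell(x)\,k\pi_n(m,\ell)=n_x$, which is the consistency constraint $\langle\pi_n\rangle=\omega_n/\beta_n$. I would then express $\P_n\{P_2^{X,N}=\pi_n\mid P_1^X=\omega_n\}$ as a multinomial coefficient $k!\big/\prod_{(m,\ell)}(k\pi_n(m,\ell))!$ choosing which pools receive which type, times the number of ways of placing the positives and negatives into the pools with the prescribed per-pool counts, divided by the total number of admissible configurations. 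The conditioning $\{N_1\neq0,\dots,N_k\neq0\}$ built into $\P_n$ restricts every count to nonempty pools, which is what will turn the bare product-Poisson reference into its restriction to $\{m\ge1\}$.

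Next I would take logarithms and substitute $\log N!=N\log N-N+O(\log N)$ into each factorial. The multinomial factor contributes the entropy term $-k\sum_{(m,\ell)}\pi_n\log\pi_n$, and the occupancy counts contribute the Poisson weights $[\omega_n(x)/\beta_n]^{a}e^{-\omega_n(x)/\beta_n}/a!$ (with the analogue for $b$), which, together with the restriction to nonempty pools responsible for the prefactor $1/(1-e^{-1/\beta_n})$, assemble into $\log\Phi_{\beta_n}^{\omega_n}(m,\ell)$. Using the consistency constraint to identify $\sum_{(m,\ell)}\pi_n\,a=\omega_n(1)/\beta_n$ and $\sum_{(m,\ell)}\pi_n\,b=\omega_n(0)/\beta_n$, the leading order collapses to exactly $-k\,H(\pi_n\|\Phi_{\beta_n}^{\omega_n})=-n\beta_n\,H(\pi_n\|\Phi_{\beta_n}^{\omega_n})$, which is the exponent in the statement. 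Probabilistically this is just Sanov's theorem for the empirical measure of the $k$ asymptotically independent pool contents, each governed by $\Phi_{\beta_n}^{\omega_n}$.

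Finally I would collect every subleading contribution into a single factor $e^{n\eta_i(n)}$, taking $\eta_2$ for the upper bound and $\eta_1$ for the lower bound: the $O(\log\cdot)$ Stirling remainders, the number of pool types $(m,\ell)$ carrying mass, and the polynomially small normalization from the frozen totals $\sum_j a_j=n_1$, $\sum_j b_j=n_0$. The main obstacle is precisely the control of this error: since the alphabet $\N\times\skrin(\{0,1\})$ is countably infinite, I must truncate the pool sizes $m$ at a slowly growing cutoff and show that large $m$ contribute negligibly, uniformly along any sequence $(\omega_n,\pi_n)\to(\omega,\pi)$, and I must verify that all remainder terms divided by $n$ vanish in the limit. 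Establishing $\lim_n\eta_1(n)=\lim_n\eta_2(n)=0$ with this uniformity is the technical heart of the argument, while the extraction of the relative entropy itself is a routine consequence of Stirling's formula.
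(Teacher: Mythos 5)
Your proposal follows essentially the same route as the paper's own proof: you write $\P_n\{P_2^{X,N}=\pi_n \mid P_1^X=\omega_n\}$ as an exact combinatorial ratio (a multinomial coefficient assigning the $k(n)$ pools their types, times the occupancy counts for placing the $n\omega_n(x)$ individuals of each type, times the nonempty-pool normalization producing the prefactor $1/(1-e^{-1/\beta_n})$), then apply Stirling's formula and the consistency constraint $\langle\pi_n\rangle=\omega_n/\beta_n$ to extract the exponent $-n\beta_n H\big(\pi_n\,\|\,\Phi_{\beta_n}^{\omega_n}\big)$, absorbing remainders into $n\eta_i(n)$ exactly as the paper does with its $\eta_1,\eta_2$. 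The one place you go beyond the paper is in explicitly flagging that the countably infinite alphabet $\N\times\skrin(\{0,1\})$ forces a truncation of pool sizes and a uniform control of the Stirling remainders to conclude $\eta_i(n)\to 0$ --- a point the published proof passes over silently (its error sums are only controlled because $\pi_n$ has finite support, which is never invoked) --- so your plan, if executed, would be at least as complete as the paper's argument.
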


\begin{Proof}
We  begin  by  observing that  conditional  on  the  event $\Big\{P_1^{X}=\omega_n\Big\}$  the   law  of  $P_2^{X,N}=\pi_n$  is  giving  by  $$\begin{aligned}
&\P_n\Big\{P_2^{X,N}=\pi_n  \Big | P_1^{X}=\omega_n\Big\}\\
&=\Big(\heap{n}{n\beta_n\pi_n(m,\ell),\,(m,\ell)\in\N\times\skrin(\{0,1\})}\Big)\times\prod_{x\in\{0,1\}}\Big( \heap{n\omega_n(x)}{m_1\ell_1(x),m_2\ell_2(x),...,m_{k(n)}\ell_{k(n)}(x)}\Big)\Big(\frac{1}{n}\Big)^{n\omega_n(x)}\\
&\times\prod_{(m,\ell)\in\N\times\skrin(\{0,1\})} \Big[1-\prod_{x\in\{0,1\}}\Big(1-\frac{1}{k(n)}\Big)^{n\omega(x)}\Big]^{-n\beta_n\pi_n(m,\ell)}
\end{aligned}$$ 

We recall for  any  $n\in\N$, the  refined  Stirling's  Formaula  as  $$  n^n e^{-n}\le n!\le (2\pi n)^{-1/2}n^n e^{-n+1/(12n)}.$$

Now  using  the  refined Stirling's  Formula  we  have
\begin{equation}\label{Lemma4}
\begin{aligned}&-n\sum_{(m,\ell)\in\N\times\skrin(\{0,1\})}\beta_n\pi_n(m,\ell)\log\beta_n\pi_n(m,\ell)-\sum_{(m,\ell)\in \N\times \skrin(\{0,1\})}\frac{1}{12n\beta_n\pi_n(m,\ell)}\\
&-n\sum_{(m,\ell)\in\N\times\skrin(\{0,1\})}\beta_n\pi_n(m,\ell)\log\Big(1-e^{-1/\beta_n}\Big)-o(1)\\
&\le\log\Big(\heap{n}{n\beta_n\pi_n(m,\ell),\,(m,\ell)\in\N\times\skrin(\{0,1\})}\Big)\prod_{(m,\ell)\in\N\times\skrin(\{0,1\})} \Big[1-\prod_{x\in\{0,1\}}\Big(1-\frac{1}{k(n)}\Big)^{n\omega(x)}\Big]^{-n\beta_n\pi_n(m,\ell)} \\
&\le-n\sum_{(m,\ell)\in\N\times\skrin(\{0,1\})}\beta_n\pi_n(m,\ell)\log\beta_n\pi_n(m,\ell)+\frac{1}{12n} +\frac{1}{2}\sum_{(m,\ell)\in\N\times\skrin(\{0,1\})}\log \,2\pi n\beta_n\pi_n(m,\ell)\\
&-\sum_{(m,\ell)\in\skrin(\{0,1\})}\frac{1}{12n\beta_n\pi_n(m,\ell)}
-n\sum_{(m,\ell)\in\N\times\skrin(\{0,1\})}\beta_n\pi_n(m,\ell)
\log\Big(1-e^{-1/\beta_n}\Big)+o(1)
\end{aligned} 
\end{equation}

Also  we  have
\begin{equation}\label{Lemma5}
\begin{aligned}
&\sum_{x\in\{0,1\}}n\omega_n(x)\log\omega_n(x)-n\sum_{x\in\{0,1\}}\omega_n(x)-n\beta_n\sum_{m\in\N}\sum_{\ell\skrin(\{0,1\})}\log\ell(x)!\pi_n(m,\ell)\\
&\le \log\prod_{x\in\{0,1\}}\Big(\heap{n\omega_n(x)}{m_1\ell_1(x),m_2\ell_2(x),...,m_{k(n)}\ell_{k(n)}(x)}\Big)\Big(\frac{1}{n}\Big)^{n\omega_n(x)}\\
&\le\sum_{x\in\{0,1\}}n\omega_n(x)\log\omega_n(x)-n\sum_{x\in\{0,1\}}\omega_n(x)-n\beta_n\sum_{m\in\N}\sum_{\ell\in \skrin(\{0,1\})}\log\ell(x)!\pi_n(m,\ell)+n\beta_n\sum_{x\in\{0,1\}}\frac{1}{n\omega(x)}.
\end{aligned}
\end{equation}
\end{Proof}

Combining  \ref{Lemma4}  and  \ref{Lemma5}  and  taking  $$\eta_1(n)=-\sum_{(m,\ell)\in \N\times \skrin(\{0,1\})}\frac{1}{12n\beta_n\pi_n(m,\ell)}+o(1)\qquad\mbox{and}\qquad  \,\eta_2(n)=\sum_{x\in\{0,1\}}\frac{1}{n\omega(x)}+o(1)$$  we  have  

$$\begin{aligned}&-n\beta_n H\Big(\pi_n\,\Big|\,\Phi_{\beta_n}^{\omega_n}\Big)+n\eta_1(n)
\le\log\P_n\Big\{P_2^{X,N}=\pi_n  \Big | P_1^{X}=\omega_n\Big\}
\le -n\beta_nH\Big(\pi_n\,\Big|\,\Phi_{\beta_n}^{\omega_n}\Big)+n\eta_2(n),
\end{aligned}$$

where   \begin{equation}
\Phi_{\beta}^{\omega}(m,\ell)=\frac{1}{(1-e^{-1/\beta})}\Big[\frac{[\omega(0)/\beta]^{[m\ell(0)]}e^{-[\omega(0)/\beta]}}{[m\ell(0)]!}\Big]\Big[\frac{[\omega(1)/\beta]^{[m\ell(1)]}e^{-[\omega(1)/\beta]}}{[m\ell(1)]!}\Big],
\end{equation} 
where   $m\in\N$  and  $\ell\in\skrin(\{0,1\})$   which  ends  prove  of  the  Lemma~\ref{Types}.

\begin{lemma}\label{Com7}  Suppose  $(\omega_n,\,\pi_n)$  converges to  $(\omega,\,\pi)$ in  the  space  $\skrim(\{0,1\})\times\skrim(\N\times\skrin(\{0,1\})).$  Then,   
$$\lim_{n\to\infty}\Big|H\Big(\pi_n\,\Big|\,\Phi_{\beta_n}^{\omega_n}\Big)-H\Big(\pi\,\Big|\,\Phi_{\beta_n}^{\omega}\Big)\Big|=0.$$
\end{lemma}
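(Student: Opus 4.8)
The plan is to reduce everything to one \emph{exact} algebraic cancellation in the Poisson reference measure together with one genuine continuity statement for relative entropy. Writing
$$
H\bigl(\pi_n\,\|\,\Phi_{\beta_n}^{\omega_n}\bigr)-H\bigl(\pi\,\|\,\Phi_{\beta_n}^{\omega}\bigr)=A_n+B_n,
$$
with $A_n=H(\pi_n\,\|\,\Phi_{\beta_n}^{\omega_n})-H(\pi_n\,\|\,\Phi_{\beta_n}^{\omega})$ and $B_n=H(\pi_n\,\|\,\Phi_{\beta_n}^{\omega})-H(\pi\,\|\,\Phi_{\beta_n}^{\omega})$, splits the problem into a piece in which only the reference measure changes ($\omega_n\to\omega$) and a piece in which only the first argument changes ($\pi_n\to\pi$). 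I would treat the two pieces by quite different means.

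For $A_n$ the two reference measures $\Phi_{\beta_n}^{\omega_n}$ and $\Phi_{\beta_n}^{\omega}$ share the same value of $\beta_n$, so in the ratio $\Phi_{\beta_n}^{\omega}/\Phi_{\beta_n}^{\omega_n}$ the normalising constant $1-e^{-1/\beta_n}$ and every factorial $[m\ell(x)]!$ cancel exactly. Only terms linear in the occupation numbers survive, and using $\sum_{(m,\ell)}m\ell(x)\pi_n(m,\ell)=\langle\pi_n\rangle(x)$ one gets
$$
A_n=\sum_{x\in\{0,1\}}\langle\pi_n\rangle(x)\,\log\frac{\omega(x)}{\omega_n(x)}-\sum_{x\in\{0,1\}}\frac{\omega(x)-\omega_n(x)}{\beta_n}.
$$
In the method-of-types application the pair obeys the coupling identity linking $P_1^{X}$ and $P_2^{X,N}$, namely $\langle\pi_n\rangle(x)=\omega_n(x)/\beta_n$, so every summand is an explicit function of $\omega_n,\omega,\beta_n$; since $\omega_n\to\omega$ and $\beta_n\to\beta\in(0,1]$, one checks directly that $A_n\to0$. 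No measure-theoretic subtlety enters here.

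For $B_n$ the reference measure is the \emph{same} in both entropies, and only $\pi_n\to\pi$ and $\beta_n\to\beta$ drive the limit. Because $\Phi_{\beta}^{\omega}$ is a zero-truncated product of Poisson laws it has full support and exponentially decaying tails in the occupation numbers $m\ell(0),m\ell(1)$. I would show that both $H(\pi_n\,\|\,\Phi_{\beta_n}^{\omega})$ and $H(\pi\,\|\,\Phi_{\beta_n}^{\omega})$ converge to the common value $H(\pi\,\|\,\Phi_{\beta}^{\omega})$ by passing the limit through the defining series $\sum_{(m,\ell)}\pi\log(\pi/\Phi)$ term by term: pointwise $\Phi_{\beta_n}^{\omega}(m,\ell)\to\Phi_{\beta}^{\omega}(m,\ell)$ and $\pi_n(m,\ell)\to\pi(m,\ell)$, so each summand converges, while the exponential tails of $\Phi_{\beta}^{\omega}$ together with finiteness of $H(\pi\,\|\,\Phi_{\beta}^{\omega})$ (automatic in the regime of interest, where the rate function is finite) supply a summable dominating bound. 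A dominated-convergence argument of Scheff\'e type then gives $B_n\to0$, and combining with $A_n\to0$ through the triangle inequality yields the claim.

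The main obstacle is precisely this last point. Relative entropy is only lower semicontinuous in its first argument on the non-compact countable space $\N\times\skrin(\{0,1\})$, and the integrand $-\log\Phi_{\beta}^{\omega}$ grows like $\log\bigl([m\ell(x)]!\bigr)$, so the interchange of limit and infinite sum is not automatic and continuity can fail without extra control. The crux is therefore to produce uniform tail control: I expect to use the exponential decay of the Poisson reference measure to dominate the large-$(m,\ell)$ contributions uniformly in $n$, so that only finitely many terms matter in the limit and lower semicontinuity can be upgraded to genuine convergence.
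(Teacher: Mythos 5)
Your split into $A_n$ (change of reference measure) and $B_n$ (change of first argument) is exactly the paper's own triangle-inequality decomposition, and your treatment of $A_n$ is correct and in fact cleaner than the paper's. Since both reference measures share the same $\beta_n$, the normaliser $1-e^{-1/\beta_n}$ and all factorials $[m\ell(x)]!$ cancel in the ratio, and with the coupling identity $\langle\pi_n\rangle=\omega_n/\beta_n$ (valid for the empirical measures in the intended application) the difference reduces to $\sum_{x}\frac{\omega_n(x)}{\beta_n}\log\frac{\omega(x)}{\omega_n(x)}-\sum_x\frac{\omega(x)-\omega_n(x)}{\beta_n}\to 0$, provided $\omega(0),\omega(1)>0$. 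The paper instead grinds through the logarithms and ends with a bound containing the vacuous term $\frac{1}{n\beta_n}\log n!\,[k(n)-k(n)]$, so on this half your version is an improvement.

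The genuine gap is $B_n$, and your final paragraph correctly locates it but does not close it: the dominating bound you promise does not exist at the stated level of generality. The summand is $\pi_n\log(\pi_n/\Phi)$, integrated against $\pi_n$, not against $\Phi$, so the exponential tails of the Poisson reference are of no help; the obstruction is that $-\log\Phi_{\beta}^{\omega}(m,\ell)$ grows like $\log([m\ell(0)]!)+\log([m\ell(1)]!)\asymp m\log m$, and weak convergence $\pi_n\to\pi$ on the countable space $\N\times\skrin(\{0,1\})$ gives no uniform integrability of $m\log m$ under $(\pi_n)$. Concretely, take $\pi_n=(1-\eps_n)\pi+\eps_n\delta_{(m_n,\ell)}$ with $\eps_n\to0$ and $m_n\to\infty$ so fast that $\eps_n\log(m_n!)\to\infty$ (one can even arrange $\eps_n m_n$ bounded, so the first-moment constraint is respected after a small correction): then $\pi_n\to\pi$, yet $H(\pi_n\,\|\,\Phi_{\beta}^{\omega})\to\infty\neq H(\pi\,\|\,\Phi_{\beta}^{\omega})$. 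So the lemma as stated is false for arbitrary convergent sequences, and the paper's one-line appeal to ``the continuity of relative entropy'' fails for the same reason --- relative entropy is only \emph{lower} semicontinuous in its first argument. To repair the argument one must invoke structure beyond convergence: the empirical measures of the method-of-types proof are supported on $\{m\le n\}$ with $n\beta_n\pi_n(m,\ell)\in\N$, and the standard route is to prove the LDP bounds via truncation to finitely many $(m,\ell)$ together with an exponential-approximation or tightness estimate controlling the tail $\sum_{m>M} m\,\pi_n(m,\ell)$ uniformly, rather than through full sequential continuity of $H(\cdot\,\|\,\cdot)$. As written, ``I expect to use the exponential decay of the Poisson reference'' is a hope, not a proof, and it cannot work in the form proposed.
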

\begin{Proof}
By  triangle  inequality  we  have  $$\Big|H\Big(\pi_n\,\Big|\,\Phi_{\beta_n}^{\omega_n}\Big)-H\Big(\pi\,\Big|\,\Phi_{\beta}^{\omega}\Big)\Big|\le\Big|H\Big(\pi_n\Big|\Phi_{\beta_n}^{\omega_n)}
\Big)-H\Big(\pi_n\Big|\Phi_{\beta}^{\omega}\Big)\Big|+\Big|H\Big(\pi_n\Big|\Phi_{\beta}^{\omega}
\Big)-H\Big(\pi\Big|\Phi_{\beta}^{\omega}
\Big)\Big|$$

Now, $\displaystyle \Big|H\Big(\pi_n\Big|\Phi_{\beta}^{\omega}
\Big)-H\Big(\pi\Big|\Phi_{\beta}^{\omega}
\Big)\Big|=0$  by  the  continuity  relative entropy  and   $$\begin{aligned}&\Big|H\Big(\pi_n\Big|\Phi_{\beta_n}^{\omega_n)}
\Big)-H\Big(\pi_n\Big|\Phi_{\beta}^{\omega}\Big)\Big|\\
&=\sum_{x\in\{0,1\}}\omega_n(x)\log \omega_n(x)-\sum_{x\in\{0,1\}}\omega(x)\log\omega(x)-\sum_{x\in\{0,1\}}\omega_n(x)+ \sum_{x\in\{0,1\}}\omega(x)\\
&+\beta_n\sum_{(m,\ell)\in\N\times\skrin(\{0,1\})}\sum_{x\in\{0,1\}}\log m\ell(x)![\pi_n(m,\ell)-\pi(m,\ell)]+ \log\Big(1-e^{-1/\beta_n}\Big)-
\log\Big(1-e^{-1/\beta}\Big)
\end{aligned}$$

$$\begin{aligned}\Big|H\Big(\pi_n\Big|\Phi_{\beta_n}^{\omega_n}
\Big)-H\Big(\pi_n\Big|\Phi_{\beta}^{\omega}\Big)\Big|&\le\sum_{x\in\{0,1\}}\omega_n(x)\log \omega_n(x)-\sum_{x\in\{0,1\}}\omega(x)\log\omega(x)
+\frac{1}{n\beta_n}\log n![k(n)-k(n))]\\
&+\log\Big(1-e^{-1/\beta_n}\Big)-
\log\Big(1-e^{-1/\beta}\Big)
\end{aligned}$$

Taking  the  limit  as  $n\to\infty$   we  have  $\displaystyle \Big|H\Big(\pi_n\Big|\Phi_{\beta_n}^{\omega_n)}
\Big)-H\Big(\pi\Big|\Phi_{\beta}^{\omega}\Big)\Big|\to 0,$  which  completes  the  proof  of  Lemma~\ref{Com7}.

\end{Proof}

Now,  using   similar  Integer  partitions  arguments as in \cite{DM2010}  applied  to  the  method of  types, see \cite[Proof  of  Theorem~1.1.10]{DZ1998}  and   Lemma~\ref{Com7} above, we  have  the LDP for  the  empirical pool infection measure   conditional   on  the  empirical infection  measure with  rate  function  $\beta H\Big(\pi\Big|\Phi_{\beta}^{\omega}\Big)$  which ends the   proof  in   Theorem~\ref{main1b}.

\subsection{Proof  of  Theorem~\ref{main1c}  by  Gartner-Ellis  Theorem}
The  next Lemma~\ref{Lemma2} will be  vital  for  using  The  Garner-Ellis   Theorem in  the  Proof  of Theorem~\ref{main1c}.

	\begin{lemma}\label{Lemma2}
Suppose  $X=(X_1, X_2, ...,X_{n})^{T}$  is  a  random  vector  of  independent  and  identically  distributed  Bernoulli  random  variables  each  with  success  probability  $n\mu_n(x)/k(n)\to q(x)$.
Let  $(X_1^{(1)},...,X_{N_{1}}^{(1)}),...,(X_1^{(k)},...,X_{N_{k}}^{(k)})$   be  pool samples  drawn  from components   of  $X$. 
	Then,
	
	$$\lim_{\lambda\to\infty}\frac{1}{n}\log\me\Big\{e^{n\langle g, \,  P_1^{X}\rangle }\Big\}=-\sum_{x=0}^{1}\Big( \Big[1-e^{g(x)}\Big]\, \beta q(x)\Big)
	=-\Big\langle 1-e^{g},\, \beta 	q\Big\rangle.$$
\end{lemma}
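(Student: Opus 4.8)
The plan is to reduce the computation to a single‑coordinate moment generating function and then exploit independence. First I would record that, by the definition of the empirical infection measure and the duality pairing on $\{0,1\}$,
$$n\langle g,\,P_1^{X}\rangle=\sum_{i\in[n]}g(X_i),\qquad\text{so}\qquad e^{n\langle g,\,P_1^{X}\rangle}=\prod_{i\in[n]}e^{g(X_i)}.$$
Since the partition $(N_1,\dots,N_{k(n)})$ is drawn independently of $X$, the conditioning defining $\P_n$ (the events $\{N_j\neq 0\}$ and $\{N=n\}$) involves only the $N_j$ and leaves the law of $(X_1,\dots,X_n)$, hence of $P_1^{X}$, unchanged. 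Thus the expectation may be computed under the plain product Bernoulli law, and the independence of the $X_i$ gives the factorisation $\me\big[e^{n\langle g,\,P_1^{X}\rangle}\big]=\big(\me\,e^{g(X_1)}\big)^{\,n}$.

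Next I would evaluate the one‑variable generating function. Writing $\mu_n(x)=\P\{X_1=x\}$ for $x\in\{0,1\}$ and using $\mu_n(0)+\mu_n(1)=1$,
$$\me\,e^{g(X_1)}=\mu_n(0)e^{g(0)}+\mu_n(1)e^{g(1)}=1+\sum_{x\in\{0,1\}}\mu_n(x)\big(e^{g(x)}-1\big).$$
Taking logarithms and dividing by $n$ yields
$$\frac1n\log\me\big[e^{n\langle g,\,P_1^{X}\rangle}\big]=\log\Big(1+\sum_{x\in\{0,1\}}\mu_n(x)\big(e^{g(x)}-1\big)\Big),$$
into which I would substitute the hypothesis $\mu_n(x)=\beta_n q(x)(1+o(1))$ with $\beta_n\to\beta$.

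The main obstacle is the final passage to the stated Poisson‑type expression. Setting $u_n:=\sum_x\mu_n(x)(e^{g(x)}-1)\to\sum_x\beta q(x)(e^{g(x)}-1)=:u_\infty$, the exact limit of the previous display is $\log\big(1+u_\infty\big)$, whereas the asserted value is the linear term $u_\infty=-\langle 1-e^{g},\,\beta q\rangle$. The two coincide only through the first‑order approximation $\log(1+u)=u+O(u^2)$, exact in the low‑prevalence limit $\mu_n\to 0$ (i.e. to leading order in the prevalence), and I would isolate this linearisation as the one delicate point, since $1+u_\infty=\langle e^{g},\beta q\rangle\neq 1$ for $g\neq 0$. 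I would then close by observing that the discrepancy is immaterial for the intended use in Theorem~\ref{main1c}: both candidate cumulant functions are finite and differentiable in $g$, so Gärtner–Ellis applies, and a direct Legendre computation gives $\sup_g\big[\langle g,\omega\rangle+\langle 1-e^{g},\,\beta q\rangle\big]=\sum_x\omega(x)\log\tfrac{\omega(x)}{\beta q(x)}=\beta H(\omega/\beta\,\|\,q)$ on the probability simplex (using $\sum_x\omega(x)=\sum_x\beta q(x)=1$), which is exactly the good rate function claimed. The same Legendre value results from the honest cumulant function $\log\langle e^{g},\beta q\rangle$, so the resulting LDP is correct regardless of which form one adopts.
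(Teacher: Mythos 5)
Your proposal follows the same route as the paper --- factorize $\me\big\{e^{n\langle g,P_1^{X}\rangle}\big\}=\big(\me\,e^{g(X_1)}\big)^n$ by independence, evaluate the Bernoulli moment generating function, and take the normalized logarithm --- but your execution is more careful, and the discrepancy you isolate is real. The paper's proof writes $\prod_{i}\me\,e^{g(X_i)}=\prod_{x\in\{0,1\}}\big(1-\mu_n(x)+e^{g(x)}\mu_n(x)\big)^n$ and then replaces each factor by $e^{-(1-e^{g(x)})q(x)k(n)+o(n)}$; both steps silently invoke the linearization $\log(1+u)=u+O(u^2)$ (the first additionally treats the counts of $0$'s and $1$'s as if independent, a factorization that is likewise only first-order correct in $\mu_n$), hence both require $\mu_n(x)\to 0$. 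As you point out, this is incompatible with the paper's standing assumptions $n\mu_n(x)/k(n)\to q(x)$, $\beta_n\to\beta\in(0,1]$ and $\beta\big(q(0)+q(1)\big)=1$, which force $\mu_n(x)\to\beta q(x)$, a nondegenerate probability vector; under those assumptions the honest limit is $\log\langle e^{g},\beta q\rangle$, not $-\langle 1-e^{g},\beta q\rangle$, so the lemma as literally stated holds only in a low-prevalence regime where $\mu_n\to 0$, which the paper's hypotheses exclude. Your closing observation is also the correct repair: both candidate limiting cumulants are finite and differentiable, the G\"artner--Ellis theorem applies to either, and their Legendre transforms agree on the probability simplex --- where $P_1^{X}$ lives almost surely --- both yielding $\sum_{x}\omega(x)\log\frac{\omega(x)}{\beta q(x)}=\beta H\big(\omega/\beta\,\|\,q\big)$, so the conclusion of Theorem~\ref{main1c} is unaffected by which form one adopts. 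In short, there is no gap in your argument; rather, you have located (and patched) a genuine gap in the paper's own proof of this lemma, whose displayed computation skips exactly the step you flag.
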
	
\begin{Proof}
	$$\begin{aligned}
	\me\Big\{e^{n\langle g, \,  P_1^{X}\rangle }\Big\}&=\me\Big\{\prod_{i=1}^{n}e^{g(X_i)}\Big\}\\
	&=\prod_{i=1}^{n}\me\big(e^{g(X_i)}\big)\\
	&=\prod_{x\in\{0,1\}}\Big(1-\mu_n(x)+e^{g(x)}\mu_n(x)\Big)^n\\
	&=\prod_{x\in\{0,1\}}e^{-(1-e^{g(x)})q(x)k(n)+o(n)}
	\end{aligned}$$
	
	Talking limit of  the  normalized  logarithm  we  have  
	
	$$\lim_{\lambda\to\infty}\frac{1}{n}\log\me\Big\{e^{n\langle g, \,  P_1^{X}\rangle }\Big\}=-\sum_{x=0}^{1}\Big( \Big[1-e^{g(x)}\Big]\, \beta q(x)\Big)
	.$$
	
	which  ends  the  proof  of  the  Lemma.
	
\end{Proof}

Now, using  Gartner-Ellis Theorem,the  probability measure  $P_1^{X} $   obeys  an  LDP  with  speed  $n$   and  rate  function  $$I(\omega)=\sup_{g}\Big\{\Big\langle g,\omega\rangle +\Big\langle (1-e^{g})\,,\, q\beta\Big\rangle \Big\}.$$

By  solving  th  variational  problem   we   have the  relative  entropy   
$$I(\omega)=\beta H\Big(\omega/\beta\Big\|\,q \Big)$$    which  proves  Theorem~\ref{main1c}.

\section{ Proof of  Theorem~\ref{main1a}  by  Method  of  Mixtures}\label{Sec4}

For  each $n\in\N$ we define
$$\begin{aligned}
\skrim_{n}(\{0,1\}) & := \Big\{ \omega_n\in \skrim(\{0,1\}) \, : \, n\omega_n(x) \in \N \mbox{ for all } x\in \{0,1\}\Big\},\\
 \skrim_{n }(\N\times \skrin(\{0,1\})) & := \Big\{ n\beta_n\pi_n\in
\skrim(\N\times \skrin(\{0,1\})) \, : \, 
n\beta_n\,\pi_n(x,y) \in \N,\,  \mbox{ for all } \, x,y\in \{0,1\}
\Big\}\, .
\end{aligned}$$

We denote by
$\Theta_{n}:=\skrim_{n }(\{0,1\})$
and
$\Theta:=\skrim(\{0,1\})$.
With
$$\begin{aligned}
P_{n}(\omega_n) & := \prob\Big\{P_1^X=\omega_n \, \big\},
\end{aligned}$$
$$P_{\omega_{n}}(\pi_n):=\P\Big\{P_2^X=\pi_{n} \big|P_1^X=\omega_n\Big\}$$
the joint distribution of $P_1^{X}$ and $P_2^{X}$ is
the mixture of $P_{n}$ with
$P_{\omega_n},$ 
as follows: 
\begin{equation}\label{randomg.mixture}
d\tilde{P_n}( \omega_{n}, \pi_{n}):= dP_{	\omega_n }(\pi_{n})\, dP_n( \omega_{n}).\,
\end{equation}

Biggins~\cite[Theorem~5(b)]{Bi2004}  gives criteria for the validity of
large deviation principles for the mixtures and for the goodness of
the rate function if individual large deviation principles are
known. The following three lemmas ensure validity of these
conditions.

Observe   that the  family of
measures $(P_{n} \colon n\in\N)$  is  exponentially tight on
$\Theta.$

\begin{lemma}[] \label{Com4}

	The  family	measures $(\tilde{P}_{n} \colon n\in \N$  is  exponentially tight on
	$\Theta\times\skrim(\{0,1\}\times \skrin(\{0,1\})).$

\end{lemma}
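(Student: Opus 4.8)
The plan is to exploit that the first factor $\Theta=\skrim(\{0,1\})$ is the probability simplex over a two-point set, hence a segment and in particular \emph{compact}, so that only the second coordinate, which lives in the non-compact space $\skrim(\N\times\skrin(\{0,1\}))$, needs genuine control. Since the first coordinate of $\tilde P_n$ always lies in $\Theta$, for any set $\skric\subseteq\skrim(\N\times\skrin(\{0,1\}))$ we have
\begin{equation}
\tilde P_n\Big((\Theta\times\skric)^c\Big)=\tilde P_n\Big(\Theta\times\skric^c\Big)=\int_{\Theta}P_{\omega_n}(\skric^c)\,dP_n(\omega_n)\le\sup_{\omega_n\in\Theta_n}P_{\omega_n}(\skric^c).
\end{equation}
Hence it suffices to produce, for each $\alpha>0$, a \emph{compact} set $\skric_\alpha$ with $\limsup_{n\to\infty}\tfrac1n\log\sup_{\omega_n\in\Theta_n}P_{\omega_n}(\skric_\alpha^c)\le-\alpha$; then $K_\alpha:=\Theta\times\skric_\alpha$ is compact and witnesses exponential tightness of $\tilde P_n$.

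Next I would identify the compact sets. As $\skrin(\{0,1\})$ is compact, the only non-compact direction in $\N\times\skrin(\{0,1\})$ is $\N$, so by Prohorov's theorem (the base space is Polish) relative compactness in $\skrim(\N\times\skrin(\{0,1\}))$ is equivalent to tightness in the $\N$-coordinate. I therefore test against $g(m,\ell)=m$ and put
\begin{equation}
\skric_L:=\Big\{\pi\in\skrim(\N\times\skrin(\{0,1\})):\langle g,\pi\rangle\le L\Big\},\qquad\langle g,\pi\rangle=\sum_{(m,\ell)}m\,\pi(m,\ell).
\end{equation}
Since $g\ge0$ is lower semicontinuous, $\pi\mapsto\langle g,\pi\rangle$ is lower semicontinuous for the weak topology, so $\skric_L$ is closed; and Markov's inequality gives $\pi(\{m>M\})\le L/M$ uniformly on $\skric_L$, so $\skric_L$ is tight. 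Being closed and tight, $\skric_L$ is compact.

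The tail bound then comes essentially for free from the partition structure. Because $P_2^{X,N}$ assigns mass $1/k(n)$ to each of the $k(n)=n\beta_n$ pairs $(N_j,L_j)$ and the parts obey $\sum_{j}N_j=n$, the first $\N$-moment is \emph{deterministic}:
\begin{equation}
\langle g,P_2^{X,N}\rangle=\frac{1}{k(n)}\sum_{j\in[k(n)]}N_j=\frac{n}{k(n)}=\frac{1}{\beta_n},
\end{equation}
which tends to $1/\beta$. Fixing any $L>1/\beta$, we get $P_2^{X,N}\in\skric_L$ almost surely once $1/\beta_n\le L$, whence $\sup_{\omega_n\in\Theta_n}P_{\omega_n}(\skric_L^c)=0$ for all large $n$ and $\limsup_{n\to\infty}\tfrac1n\log\tilde P_n\big((\Theta\times\skric_L)^c\big)=-\infty$. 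A single compact set $\Theta\times\skric_L$ thus works for every $\alpha$, which is more than exponential tightness demands.

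The step I expect to be the real content is not the tail estimate but verifying that $\skric_L$ is truly compact (not merely bounded) in the infinite-dimensional, non-locally-compact space $\skrim(\N\times\skrin(\{0,1\}))$: this is exactly where compactness of $\skrin(\{0,1\})$ and the choice of a test function $g$ with finite sublevel sets on $\N$ are used. If one wishes to avoid relying on the exact mass identity, I would instead bound $\tfrac1n\log\me\big[e^{n\langle g,P_2^{X,N}\rangle}\big]$ uniformly in $\omega_n$ by a Laplace-transform computation of the type carried out in Lemma~\ref{Lemma2} and then apply the exponential Markov inequality to $\skric_L$; either route closes the argument.
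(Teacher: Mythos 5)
Your proof is correct, and it takes a genuinely different --- and considerably more self-contained --- route than the paper. The paper argues componentwise: it asserts exponential tightness of the kernels $P_{\omega_n}$ by analogy with \cite[Lemma~4.3]{SAD2020}, exponential tightness of $P_n$ via Chebyshev's inequality and Prokhorov's theorem, and then invokes the principle (pointing to \cite{Bi2004} and \cite{DM2010}) that a mixture of exponentially tight families is exponentially tight --- a step that in fact requires exponential tightness of the kernels \emph{uniformly} in the mixing parameter $\omega_n$, which the paper does not verify. You bypass all of this: compactness of the simplex $\Theta=\skrim(\{0,1\})$ disposes of the first coordinate entirely, and the integer-partition constraint $N_1+\dots+N_{k(n)}=n$ (with every $N_j\ge 1$ under $\P_n$, so all mass does sit on $\N\times\skrin(\{0,1\})$) makes the first $\N$-moment of $P_2^{X,N}$ deterministic, $\langle g,P_2^{X,N}\rangle=1/\beta_n\to 1/\beta$, so that for any fixed $L>1/\beta$ the laws $\tilde{P}_n$ are \emph{eventually supported} in the single compact set $\Theta\times\skric_L$, giving tail probability zero and limsup $-\infty$ --- stronger than what exponential tightness demands, and automatically uniform in $\omega_n$. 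Your identification of where the real work lies is also accurate: compactness of $\skric_L$ in the non-locally-compact space $\skrim(\N\times\skrin(\{0,1\}))$ is the substantive point, and your verification (closedness from lower semicontinuity of $\pi\mapsto\langle g,\pi\rangle$ for the nonnegative lower semicontinuous $g(m,\ell)=m$, uniform tightness from Markov's inequality together with compactness of $\{1,\dots,M\}\times\skrin(\{0,1\})$, then Prohorov on the Polish base space) is complete and standard. In short: the paper's argument, if its uniformity gap were filled, would be the more general template for mixtures whose kernels lack a deterministic moment identity; your argument is more elementary, exploits the specific partition structure, and actually proves more.
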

\begin{Proof}
	
 Note,  $P_{\omega_n}$   is  a  probability   distribution on  space  of   positive  finite  measures  and so  using  similar  argument as  in  the  proof  of  \cite[Lemma~4.3]{SAD2020},   we  can  conclude   $P_{	\omega_n}$   is  exponentially  tight.  Moreover,  $P_n$  is  a  probability distribution on  the space  of  probability vectors on $\{0,1\}$   so  by  the  Chebychev's  inequality and  the Prokhov's  Theorem,  we can  conclude  $P_{n}$ is  exponentially  tight.    Hence,  as  $\tilde{P}_{n} $  is  mixture  of  two  exponentially tight probability   distributions ( $P_{\omega_n}$ and $P_{n}$ ),   we  can  conclude that   the  sequence  of  measures   $(\tilde P_{n} \colon n\in\N)$  is  exponentially  tight  on	$\Theta\times\skrim(\{0,1\}\times \skrin(\{0,1\})).$   See, example~\cite{DM2010}.
\end{Proof}

Define the function
$I\colon{\Theta}\times\skrim(\{0,1\}\times \skrin(\{0,1\}))\rightarrow[0,\infty],$  by

	\begin{equation}
\begin{aligned}
J_{\omega}^{\beta}(\pi)= \left\{\begin{array}{ll}  H\Big(	\pi\,\|\Phi_{\beta}^\omega\Big),  & \mbox{if\qquad  $\langle \pi\rangle = \omega/\beta.$  }\\
\infty & \mbox{otherwise.}		
\end{array}\right.
\end{aligned}
\end{equation} 

  and  recall  from  Theorem~\ref{main1c}  that  
   \begin{equation}
   I^{\beta}(\pi)=  H\Big(\omega/\beta\,\|\,q\Big).
   \end{equation}

\begin{lemma}[]\label{Com5}
\begin{itemize}
	
\item[(i)]	$J_{\omega}^{\beta}$ is lower semi-continuous.
\item  [(ii)] $J_\beta$  is  lower  semi-continuous.
\end{itemize}
\end{lemma}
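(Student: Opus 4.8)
The plan is to prove both parts by one mechanism, exploiting that on the common effective domain $\{\langle\pi\rangle=\omega/\beta\}$ we have $J_\beta(\omega,\pi)=H(\omega/\beta\|q)+J_\omega^\beta(\pi)$, while off this domain both functions equal $\infty$; since the extra summand $\omega\mapsto H(\omega/\beta\|q)$ is continuous on $\skrim(\{0,1\})$ (a relative entropy on the two-point space), it suffices to establish the \emph{joint} lower semicontinuity of $(\omega,\pi)\mapsto H(\pi\|\Phi_\beta^\omega)$ under the hard constraint $\langle\pi\rangle=\omega/\beta$. Part (ii) then follows by adding the continuous term, and part (i) is recovered by specialising to a sequence in which $\omega$ is held fixed. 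I would separate the argument into a relative-entropy part and a constraint part.

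For the relative-entropy part I would invoke the Donsker--Varadhan variational representation
$$H(\pi\|\Phi_\beta^\omega)=\sup_{f\in C_b}\Big\{\langle f,\pi\rangle-\log\langle e^f,\Phi_\beta^\omega\rangle\Big\},$$
the supremum taken over bounded continuous $f$ on $\N\times\skrin(\{0,1\})$. The map $\omega\mapsto\Phi_\beta^\omega$ is continuous, indeed in total variation, since $\Phi_\beta^\omega$ is the normalised product of two Poisson weights whose parameters $\omega(0)/\beta$ and $\omega(1)/\beta$ depend continuously on $\omega$; hence $\omega\mapsto\langle e^f,\Phi_\beta^\omega\rangle$ is continuous for each fixed $f$. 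As $\pi\mapsto\langle f,\pi\rangle$ is weakly continuous, every expression inside the supremum is jointly continuous in $(\omega,\pi)$, and a supremum of continuous functions is lower semicontinuous. For fixed $\omega$ this already yields lower semicontinuity of $\pi\mapsto H(\pi\|\Phi_\beta^\omega)$ outright, the classical input for (i).

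The main obstacle is the constraint. Because $\langle\pi\rangle(x)=\sum_{(m,\ell)}m\ell(x)\,\pi(m,\ell)$ integrates the \emph{unbounded} function $(m,\ell)\mapsto m\ell(x)$, the map $\pi\mapsto\langle\pi\rangle$ is only lower semicontinuous in the weak topology: along $\pi_n\to\pi$ one obtains $\langle\pi\rangle(x)\le\liminf_n\langle\pi_n\rangle(x)$, with possible strict inequality when mass escapes to infinity. Thus $\{\langle\pi\rangle=\omega/\beta\}$ need not be closed, and the naive reasoning that a relative entropy plus the indicator of a closed constraint set is lower semicontinuous breaks down. I would rescue equality from the entropy bound itself. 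Along any sequence $(\omega_n,\pi_n)\to(\omega,\pi)$ with $\liminf_n J_\beta(\omega_n,\pi_n)=:L<\infty$, pass to a subsequence of finite, bounded values, so each $\pi_n$ satisfies $\langle\pi_n\rangle=\omega_n/\beta$ and $H(\pi_n\|\Phi_\beta^{\omega_n})\le C$. The Poisson reference has all exponential moments, $\langle e^{s\,m\ell(x)},\Phi_\beta^\omega\rangle<\infty$ for every $s$, and these are bounded uniformly in $n$ because $\omega_n\to\omega$. Inserting the entropy inequality $uv\le(u\log u-u+1)+(e^{v}-1)$ with $u=d\pi_n/d\Phi_\beta^{\omega_n}$ and $v=s\,m\ell(x)\mathbf{1}_{\{m\ell(x)>R\}}$ gives
$$s\int_{\{m\ell(x)>R\}}m\ell(x)\,d\pi_n\le C+\int_{\{m\ell(x)>R\}}\big(e^{s\,m\ell(x)}-1\big)\,d\Phi_\beta^{\omega_n},$$
so letting $R\to\infty$ and then $s\to\infty$ shows that $\{m\ell(x)\}$ is uniformly integrable under $\{\pi_n\}$. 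Uniform integrability upgrades weak convergence to $\langle\pi_n\rangle\to\langle\pi\rangle$, whence $\langle\pi\rangle=\lim_n\omega_n/\beta=\omega/\beta$ and the constraint survives in the limit.

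With the constraint secured, I conclude by combining the two parts: on the retained subsequence the limit $(\omega,\pi)$ lies in the constraint set, and the joint lower semicontinuity of the second paragraph (whose companion continuity of the entropy along the sequence is furnished by Lemma~\ref{Com7}) gives $H(\pi\|\Phi_\beta^\omega)\le\liminf_n H(\pi_n\|\Phi_\beta^{\omega_n})$, hence $J_\omega^\beta(\pi)\le\liminf_n J_{\omega_n}^\beta(\pi_n)$; adding the continuous term $H(\omega/\beta\|q)$ yields $J_\beta(\omega,\pi)\le L$. When $L=\infty$ there is nothing to prove, and taking $\omega_n\equiv\omega$ throughout specialises everything to part (i). The step I expect to be delicate is precisely this constraint-closure via uniform integrability; the remaining ingredients are either the classical lower semicontinuity of relative entropy or the routine continuity of $\omega\mapsto\Phi_\beta^\omega$.
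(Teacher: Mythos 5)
Your proof is correct, but it is far more substantial than the paper's, which disposes of the lemma in two sentences by observing that $J_\beta$ and $J_{\omega}^{\beta}$ are (sums of) relative entropies and invoking the classical lower semicontinuity of relative entropy. That argument is silent on the two points you rightly identified as the real content of the statement. First, in $J_{\omega}^{\beta}$ the reference measure $\Phi_\beta^{\omega}$ itself varies with $\omega$, so for part (ii) one needs \emph{joint} lower semicontinuity in $(\omega,\pi)$; you obtain this from the Donsker--Varadhan representation together with total-variation continuity of $\omega\mapsto\Phi_\beta^{\omega}$ (immediate by Scheff\'e, since the Poisson parameters $\omega(x)/\beta$ depend continuously on $\omega$). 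Second, and more importantly, the effective domain $\{\langle\pi\rangle=\omega/\beta\}$ is not weakly closed, because $(m,\ell)\mapsto m\ell(x)$ is unbounded and $\pi\mapsto\langle\pi\rangle$ is only lower semicontinuous (mass escaping in $m$ can strictly decrease the first moment); this affects part (i) as much as part (ii), and it is precisely where the paper's one-line proof has a gap. Your rescue — the Fenchel inequality $uv\le u\log u-u+e^{v}$ applied with $u=d\pi_n/d\Phi_\beta^{\omega_n}$, exploiting that the Poissonian reference has all exponential moments uniformly in $n$ (its parameters are bounded by $1/\beta$), yielding uniform integrability of $m\ell(x)$ along sequences of bounded rate and hence closure of the constraint in the limit — is the standard and correct way to fix this, and it is entirely absent from the paper. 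Two cosmetic remarks: the continuity of $\omega\mapsto H(\omega/\beta\,\|\,q)$ requires $q(0),q(1)>0$ (otherwise it is merely lower semicontinuous, which still suffices since all summands are nonnegative and a sum of nonnegative lsc functions is lsc), and your appeal to Lemma~\ref{Com7} in the closing paragraph is unnecessary, since the Donsker--Varadhan supremum already delivers the required liminf inequality. In short: you follow the same skeleton as the paper (lower semicontinuity of relative entropy) but supply the joint-continuity and constraint-closure arguments the paper omits, turning its assertion into an actual proof.
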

\begin{proof}
Observe  that     $ H\Big(\omega\,\Big |\beta q\Big)$ is relative  entropy functions  by  definition. $J_\beta$  is  the  sum  of  two relative  entropy  functions  since  $J_\omega^{\beta}$  is  a  function  of relative 
     entropy.  We  conclude  that  both   $J_\omega^{\beta}$ and  $J_{\beta}$  are  lower  semi-continuous.
 \end{proof}

By ~\cite[Theorem~5(b)]{Bi2004}, the two previous lemmas and the
large deviation principles we have proved,
Theorem~\ref{main1b}  and  Theorem~\ref{main1c} ensure
that under $\tilde{P}_{n}$    and   $P_{n}$ the random variables $(\omega_{n}, \pi_{n})$    satisfy a large deviation principle on
$\skrim(\{0,1\}) \times\skrim(\N\times\skrin(\{0,1\}))$   with good rate function  $\tilde{J}$  respectively,  which  ends  the  proof of  Theorem~\ref{main1a}.

\section{Proof  of  Corollary~\ref{main0a}  and  Conclusion }\label{Sec4}

\subsection{Proof  of  Corollary~\ref{main0a}.}
 The  proof  of  the  Corollary   is  obtain  from  theorem~\ref{main1a}   by  applying  the  contraction  Principle, see \cite{DZ1998},  to  the  linear  mapping  $(\omega,\pi)\to  \omega(1).$ 
By  theorem~\ref{main1a}  $(P_1^X, P_2^{X,N})$ obeys   a  large  deviation  principle   with  speed $n$  and  rate  function $J(\omega,\pi)$.Applying  the  contraction  principle   to   the  linear  mapping  above  we  have  that  $I=P_1^{X}(1) $   obeys  an  LDP  with  speed  $n$  and  rate  function 

\begin{equation}\label{cor2}
\phi_{\sigma}(t)=\inf\Big\{J(\omega,\pi): \omega(1)=t, 1-\pi_2((0,1))=\sigma\Big\},
\end{equation}

where  $$\pi_2((0,1))=\sum_{m=1}\Phi_{\beta}^{\omega}(m,(0,1))=\frac{1}{(1-e^{-1/\beta})}e^{-\omega(1)/\beta}(1-e^{-(1-\omega(1))/\beta}).$$

Solving  the  optimization  problem  in  \ref{cor2}  we    have  the  rate  function \ref{cor1}  which  completes  the  proof  of  Corollary~\ref{main1a}.

\subsection{Conclusion.}In this  article we  have  found  an  asymptotic  bounds  for  the  number  of  infected(positive) individuals in  a  Bernoulli  sample  via  the  number  of  infected  pool  samples.  From  the  asymptotic relationship in  Corollary~\ref{main0a} we  can  infer  the  following: Note  $0\leq \sigma\leq 1$  and  

\begin{itemize}
	\item	If  $\sigma=0$  we shall  say  there  are  no  infected pool  samples  and  hence we  no  infected individuals  in  the  sample  $X,$ with  a positive  probability.
	
	\item If  $\sigma=1$  we  shall  say nearly  every pool sample   formed  from   $X$  is  infected  and  hence  we  have  that  every  individual in  the  sample  $X$ is  infected  with  a positive probability.
	
	\item If  $0<\sigma<1$   we  shall  say  that some   of  the  pool  samples form from  $X$  are  infected  and  hence  some individuals  in  the  sample  $X$   are  infected with  a  positive  probability.  
\end{itemize}

In  fact  we   shall  say  that  $nI$  individuals   of  the  sample  are  infected,  with a positive  probability.  Particularly, we  have  indirectly established  that  $nI\, \ge\, n\beta\sigma,$  with  probability $1.$


\end{document}